\documentclass[reqno]{amsart}

\usepackage{booktabs}
\usepackage{xcolor}
\usepackage{mathtools}
\mathtoolsset{showonlyrefs, showmanualtags}
\usepackage{aligned-overset}
\usepackage{enumitem}
\usepackage{mleftright}

\numberwithin{equation}{section}

\newtheorem{theorem}{Theorem}[section]
\newtheorem{lemma}[theorem]{Lemma}

\newtheorem{corollary}[theorem]{Corollary}
\theoremstyle{remark}
\newtheorem{remark}[theorem]{Remark}

\usepackage[colorlinks=true,linkcolor=blue,citecolor=blue,urlcolor=blue]{hyperref}

\begin{document}

\title[Sharp bounds for modified Bessel functions]{Sharp two-sided bounds for the product of modified Bessel functions and for its logarithmic derivative}

\author{Javier Segura}
\address[J.~Segura]{Departamento de Matem\'aticas, Estad\'{\i}stica y Computaci\'on, Universidad de Cantabria, 39005 Santander, Spain.}
\email{javier.segura@unican.es}
\thanks{J. Segura acknowledges support from Ministerio de Ciencia, Innovaci\'on y Universidades, project PID2024-159583NB-I00 (MCIN/AEI/10.13039/501100011033/FEDER, UE)}

\author{Soichiro Suzuki}
\address[S.~Suzuki]{Department of Mathematics, Chuo University, 1-13-27 Kasuga, Bunkyo-ku, Tokyo 112-8551, Japan.}
\email{soichiro.suzuki.m18020a@gmail.com}

\subjclass[2020]{Primary 33C10; Secondary 26D07, 34A40}

\date{}

\dedicatory{}

\begin{abstract}
	We prove that, for all $x > 0$, the product of modified Bessel functions satisfies
	\[
	\frac{1}{2\sqrt{x^2+\nu^2+1/5}}<I_\nu(x)K_\nu(x)<\frac{1}{2\sqrt{x^2+\nu^2-1/4}} ,
	\]
	where the lower bound holds for $\nu>-1$ and the upper bound for $\nu\ge 1/2$, and that the sum of the logarithmic derivatives satisfies
	\[
	-\frac{x}{x^2+\nu^2-1}<\frac{I_\nu'(x)}{I_\nu(x)}+\frac{K_\nu'(x)}{K_\nu(x)}<-\frac{x}{x^2+\nu^2+7/20},
	\]
	where the upper bound holds for $\nu>-1$ and the lower bound for $\nu\ge 1$. All four constants are best possible. The four proofs share the same elementary structure: in each case the relevant auxiliary function satisfies a second-order differential inequality, and the inequality follows from the strong maximum principle. Among other applications, we obtain sharp monotonicity properties of the product and improved bounds for the ratios $I_{\nu-1}(x)/I_\nu(x)$ and $K_{\nu+1}(x)/K_\nu(x)$.
\end{abstract}

\maketitle

\section{Introduction}

The product $P_\nu(x)=I_\nu(x)K_\nu(x)$ of modified Bessel functions and related quantities appear in a considerable number of applications (see, for instance, \cite{Ellingsen:2010,Fucci:2024,Gigante:2013,Ikoma:2025,Truc:2012} and the references cited in \cite{Segura:2021,SeguraSuzuki:2026}). Bounds for $P_\nu(x)$ have been obtained by several authors \cite{Baricz:2016,Gaunt:2016,Segura:2021,SeguraSuzuki:2026}, mostly as consequences of bounds for the ratios $I_{\nu-1}(x)/I_\nu(x)$ and $K_{\nu+1}(x)/K_\nu(x)$.

In \cite{SeguraSuzuki:2026} it was proved, by means of a differential argument, that
\begin{equation}\label{eq:SSlower}
	I_\nu(x)K_\nu(x)>\frac{1}{2\sqrt{x^2+\nu^2+1/5}},\qquad x>0,\ \nu>-1,
\end{equation}
and that the constant $1/5$ cannot be lowered. This inequality had been conjectured in \cite{Segura:2021}, and it cannot be obtained from the known bounds for the ratios because it is extremely sharp in the direction $x=\sqrt{2/3}\,\nu$, $\nu\to+\infty$ (see \cite{SeguraSuzuki:2026}).

The purpose of this paper is to show that the idea behind the proof of \eqref{eq:SSlower} is more general. First, the same differential equation gives the companion upper bound
\begin{equation}\label{eq:upperprod}
	I_\nu(x)K_\nu(x)<\frac{1}{2\sqrt{x^2+\nu^2-1/4}},\qquad x>0,\ \nu\ge\tfrac12,
\end{equation}
with the best possible constant $-1/4$ and range $\nu \geq 1/2$. Therefore, writing
\begin{equation}\label{eq:defh}
	q(x)=\frac{1}{4P_\nu(x)^2},\qquad h(x)=q(x)-x^2-\nu^2,
\end{equation}
we have the two-sided estimate $-1/4<h(x)<1/5$ for $\nu\ge 1/2$.

Second, because of the Wronskian relation \cite[10.28.2]{NIST:DLMF} the product is related to the \emph{difference} of the logarithmic derivatives,
\begin{equation}\label{eq:diff}
	\ell_I(x)-\ell_K(x)=\frac{1}{x\,I_\nu(x)K_\nu(x)},\qquad \ell_I=\frac{I_\nu'}{I_\nu},\quad \ell_K=\frac{K_\nu'}{K_\nu},
\end{equation}
while the \emph{sum} of the logarithmic derivatives is the logarithmic derivative of the product,
\begin{equation}\label{eq:sum}
	\ell_I(x)+\ell_K(x)=\frac{d}{dx}\log\mleft(P_\nu(x)\mright) = \frac{P_\nu'(x)}{P_\nu(x)}.
\end{equation}
We prove that, with
\begin{equation}\label{eq:defH}
	Q(x)=- \frac{x P_\nu(x)}{P_\nu'(x)}, \qquad H(x) = Q(x) - x^2 - \nu^2,
\end{equation}
one has $H(x) < 7/20$ for $\nu>-1$ and $H(x)>-1$ for $\nu\ge1$, again with best possible constants. 

The main results are summarized in the following two theorems.
\begin{theorem}[Product]\label{thm:product}
	Let $x>0$. Then we have
	\begin{alignat}{2}
		I_\nu(x)K_\nu(x) &> \frac{1}{2\sqrt{x^2+\nu^2+1/5}} ,\qquad& &\nu>-1
		\label{eq:product-lowerbound}
		\shortintertext{and}
		I_\nu(x)K_\nu(x)&<\frac{1}{2\sqrt{x^2+\nu^2-1/4}},\qquad& &\nu\ge\tfrac12 .
		\label{eq:product-upperbound}
	\end{alignat}
	Here, the constant $1/5$ in \eqref{eq:product-lowerbound} is the smallest possible such that the inequality holds for all $x > 0$ and all $\nu > -1$,
	and the constant $-1/4$ in \eqref{eq:product-upperbound} is the largest possible such that the inequality holds for all $x > 0$ with fixed $\nu \geq 1/2$.
\end{theorem}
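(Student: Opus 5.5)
The lower bound \eqref{eq:product-lowerbound} together with the optimality of $1/5$ is exactly \eqref{eq:SSlower}, proved in \cite{SeguraSuzuki:2026}, so I would simply cite it. The plan below is for the upper bound \eqref{eq:product-upperbound} and its sharpness, and it is set up to re-use the same differential mechanism. I pass to the variable $t=\log x$. In this variable $I_\nu$ and $K_\nu$ solve $\ddot y=F(t)\,y$ with $F=x^2+\nu^2=e^{2t}+\nu^2$. By \cite[10.28.2]{NIST:DLMF} their Wronskian with respect to $t$ is $-1$. A direct check (test it on $F\equiv k^2$, $y_{1,2}=e^{\pm kt}$) shows that the product $p=y_1y_2=P_\nu$ satisfies the Ermakov-type equation
\[
2p\ddot p-\dot p^2-4Fp^2=-1 .
\]
Substituting $p=(4q)^{-1/2}$ then gives a second-order nonlinear ODE for $q$, hence for $h=q-F$. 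Since $\dot F=2x^2$ and $\ddot F=4x^2$, the equation for $h$ can be written as $\ddot h=\Phi(t,h,\dot h)$, with $\Phi$ explicit and rational in $x^2$, $\nu^2$, $h$, $\dot h$.

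\textbf{Step 1 (boundary behaviour).} As $x\to0^+$ with $\nu>0$ we have $P_\nu\to1/(2\nu)$, so $q\to\nu^2$ and $h\to0>-1/4$. As $x\to\infty$, the Hankel-type expansion $P_\nu(x)\sim\frac1{2x}\bigl(1-\frac{\mu-1}{8x^2}+\cdots\bigr)$ with $\mu=4\nu^2$ gives
\[
q=x^2+\nu^2-\tfrac14+\frac{c(\nu)}{x^2}+O(x^{-4}).
\]
So $h\to-1/4$, and I expect the coefficient $c(\nu)$ to be a positive multiple of $\mu-1$. This shows that $h>-1/4$ near infinity for $\nu>1/2$, and that $-1/4$ cannot be replaced by anything larger, since $h\to-1/4$. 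That settles optimality for every fixed $\nu\ge1/2$. The case $\nu=1/2$ is explicit: $P_{1/2}=(1-e^{-2x})/(2x)$, so $q=x^2/(1-e^{-2x})^2>x^2$ and $h>-1/4$ directly.

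\textbf{Step 2 (maximum principle).} Suppose $h+1/4$ fails to be positive on $(0,\infty)$. Because it is positive at both ends, it attains a nonpositive interior minimum. At that point $\dot h=0$ and $h\le -1/4$. I would evaluate $\Phi(t,-1/4,0)$ and aim to show it is strictly negative whenever $\nu\ge1/2$ and $x>0$; I expect something proportional to $-(4\nu^2-1)$ or a similar negative quantity plus an $x$-dependent term. This contradicts $\ddot h\ge0$ at a minimum. Equality cases would be handled by the strong maximum principle, applied to the linear equation satisfied by $h+1/4$ after freezing the coefficients at $h$, so that the difference cannot touch zero without vanishing identically. An identically vanishing difference is excluded by Step 1. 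The same argument with the level $1/5$ and a maximum reproduces \eqref{eq:SSlower} for $\nu>0$.

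\textbf{Main obstacle.} The delicate part is the sign of $\Phi$ at the critical level. Because $\Phi$ is nonlinear in $h$, I would first try to rewrite the ODE so that the coefficient of the frozen term $h+1/4$ has a definite sign and the remaining forcing term has the right sign exactly when $\nu\ge1/2$. If the plain minimum argument leaves a borderline case, I would instead compare against the barrier $-1/4+\varepsilon/x^2$ and let $\varepsilon\to0$. A second, lesser obstacle is making the asymptotic term $c(\nu)/x^2$ rigorous rather than formal. For that I would use the known convergent/asymptotic expansion of $I_\nu K_\nu$ (e.g.\ \cite[10.40]{NIST:DLMF}) with its error bound.
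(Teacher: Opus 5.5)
Your setup is correct. The Ermakov equation holds, and at a critical point of $h$ it gives $\ddot h=\Phi(t,h,0)=4qh+5x^4/q-4x^2$ with $q=h+x^2+\nu^2$. So at the critical level $q\,\Phi(t,-\tfrac14,0)=-(q+5x^2)(q-x^2)=-(q+5x^2)(\nu^2-\tfrac14)$, as you predicted.

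The gap is in Step 2. At an interior minimum you only know $h\le -1/4$, not $h=-1/4$, and the sign of $\Phi$ at the level $-1/4$ says nothing about a minimum lying strictly below it. For $h<-1/4$ the best you get is
\[
q\,\Phi(t,h,0)=5x^4-4x^2q+4q^2h<-(q+5x^2)(q-x^2),
\]
which is useless wherever $q<x^2$. In fact $\Phi(t,h,0)\to+\infty$ as $q\to0^+$, so a deep minimum is not contradicted pointwise. Your frozen-coefficient fallback hits the same wall. With $A=4x^2q$, $B=4xq-5x^2q'-10x^3$, $C=-16q^2\le0$, one gets $Aw''+Bw'+Cw=-4(q+5x^2)(q-x^2)$ for $w=h+\tfrac14$. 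The sign of this forcing term is governed pointwise by $q(x)-x^2$, not by $\nu$, so it does not ``have the right sign exactly when $\nu\ge1/2$''. The barrier $-1/4+\varepsilon/x^2$ does not touch this issue either.

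The missing ingredient is the a priori inequality $q>x^2$, i.e.\ $2xI_\nu(x)K_\nu(x)<1$ for $\nu\ge1/2$. The paper imports it as part (M2) of Lemma~\ref{lem:monotone}, from the monotonicity of $xI_\nu K_\nu$ \cite{Segura:2021}. With it, every $h\le-1/4$ gives $q\,\Phi(t,h,0)\le-(q+5x^2)(q-x^2)<0$. This contradicts $\ddot h\ge0$ at the minimum, strictly, so $\nu=1/2$ and equality cases need no separate treatment. This is exactly the paper's identity $Av''+Bv'+Cv=4(q+5x^2)(q-x^2)\ge0$ for $v=-h-1/4$.

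Using Lemma~\ref{lem:MaximumPrinciple} with $\limsup$ boundary conditions also spares you from making $c(\nu)>0$ rigorous, since only $h\to-1/4$ at infinity is needed.

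Note the asymmetry with the lower bound. For every $h\ge1/5$ and every $q>0$ one has $5x^4-4x^2q+4q^2h\ge\frac15(2q-5x^2)^2$. That is why the $1/5$ argument needs no information on $q$, while the $-1/4$ argument does.

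Citing \cite{SeguraSuzuki:2026} for the lower bound and for the optimality of $1/5$ is fine. So is deducing the optimality of $-1/4$ from $h\to-1/4$ at infinity.
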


\begin{theorem}[Sum of logarithmic derivatives]\label{thm:sum}
	Let $x>0$. Then we have
	\begin{alignat}{2}
		\frac{I_\nu'(x)}{I_\nu(x)}+\frac{K_\nu'(x)}{K_\nu(x)}&<-\frac{x}{x^2+\nu^2+7/20},\qquad& &\nu>-1
		\label{eq:sum-upperbound}
		\shortintertext{and}
		\frac{I_\nu'(x)}{I_\nu(x)}+\frac{K_\nu'(x)}{K_\nu(x)}&>-\frac{x}{x^2+\nu^2-1},\qquad&  &\nu\ge1 .
		\label{eq:sum-lowerbound}
	\end{alignat}
	Here, the constant $7/20$ in \eqref{eq:sum-upperbound} is the smallest possible such that the inequality holds for all $x > 0$ and all $\nu > -1$,
	and the constant $-1$ in \eqref{eq:sum-lowerbound} is the largest possible such that the inequality holds for all $x > 0$ and fixed $\nu \geq 1$.
	Moreover, for all $\nu \ge 1/2$ and $x>0$,
	\begin{equation}\label{eq:sum-extended}
		(x^2+\nu^2-1)\mleft(\frac{I_\nu'(x)}{I_\nu(x)}+\frac{K_\nu'(x)}{K_\nu(x)}\mright)>-x .
	\end{equation}
\end{theorem}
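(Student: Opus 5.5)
Write $P=P_\nu$, $Q=-xP/P'$ and $H=Q-x^2-\nu^2$ as in \eqref{eq:defH}. Then \eqref{eq:sum-upperbound} is equivalent to $H<7/20$, and \eqref{eq:sum-lowerbound}, \eqref{eq:sum-extended} are equivalent to $H>-1$, since $P'<0$. The plan mirrors the proof of \eqref{eq:SSlower}: derive a second-order differential equation for $H$, find the behaviour at $0^+$ and $+\infty$, and rule out an interior extremum that violates the bound by the strong maximum principle.

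\textbf{Step 1 (ODE).} The product $P$ satisfies the third-order equation
\begin{equation}
x^2P'''+3xP''+(1-4x^2-4\nu^2)P'-4xP=0 .
\end{equation}
Set $y=P'/P=-x/Q$, which satisfies a Riccati-type second-order equation. Substituting $Q=x^2+\nu^2+H$ turns this into an equation of the form
\begin{equation}
H''=A(x,H)\,H'+B(x,H,H')+R(x,H),
\end{equation}
where $R$ is an explicit rational function. The decisive feature should be the sign of $R$ when $H$ equals one of the candidate constants: $R(x,7/20)$ should have a definite sign in the direction that forbids an interior maximum at that level, and similarly $R(x,-1)$ should forbid an interior minimum at level $-1$ when $\nu\ge 1/2$.

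\textbf{Step 2 (endpoints).* } As $x\to+\infty$, the Debye/Hankel expansion of $P$ gives $H\to c$ with $c=-1/8$ or similar, strictly between $-1$ and $7/20$. As $x\to0^+$ with $\nu>0$, $P\sim 1/(2\nu)$ and $P'\sim -x/(4\nu(\nu^2-1))$ for $\nu>1$. This gives $Q\to \nu^2-1$, so $H(0^+)=-1$. This is why the lower constant is $-1$ and why it is attained only in the limit. The cases $\nu\in[1/2,1]$ and $-1<\nu\le 0$ need separate small-$x$ expansions involving $x^{2\nu}$ and logarithmic terms. For the upper bound, $H(0^+)\le 7/20$ must be checked for all $\nu>-1$.

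\textbf{Step 3 (maximum principle and sharpness).} Suppose $H$ reaches $7/20$. Then a first touching point $x_0$ is an interior maximum with $H'=0$ and $H''\le0$, which contradicts the sign of $R(x_0,7/20)$ from Step 1. Equivalently, one applies the strong maximum principle to $H-7/20$ satisfying a linear differential inequality. The argument for $-1$ is the same. For sharpness of $7/20$, I would look along the ray $x=t\nu$, $\nu\to\infty$, using the uniform Debye expansion
\begin{equation}
P\sim \frac{1}{2\sqrt{x^2+\nu^2}}\mleft(1+\frac{\alpha(t)}{\nu^2}+\cdots\mright).
\end{equation}
This yields $H\to\phi(t)$, and one checks that $\max_t\phi=7/20$. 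The analogue for the product is the direction $x=\sqrt{2/3}\,\nu$ giving $1/5$. Sharpness of $-1$ follows from $H(0^+)=-1$.

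\textbf{Main obstacle.} The hard part is Step 1. One must obtain the right algebraic form of the equation for $H$ so that the residual $R$ has a clean sign at $H=7/20$ uniformly in $\nu>-1$ and $x>0$. Possibly a weight or a shift, e.g.\ working with $Q$ directly or with $1/Q$, is needed to make the principle apply. The negative-$\nu$ small-$x$ analysis in Step 2 is secondary but tedious.
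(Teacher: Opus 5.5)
\textbf{Gap: the decisive sign computation is missing.} Your architecture (a second-order equation for $H$, limits at $0^+$ and $+\infty$, the maximum principle, Debye asymptotics along $x=\nu z$) is the paper's. However, the proposal stops exactly at the step that carries the proof, and your expectation for the lower bound is wrong as stated. From your third-order equation and $P'/P=-x/Q$, keeping $Q=x^2+\nu^2+H$ inside the coefficients, one gets the linear equation
\[
x^2Q\,H''+\bigl(5xQ-2x^2Q'-7x^3\bigr)H'-4Q^2H=15x^4-18x^2Q+4Q^2 .
\]
For $u=H-c$ the right-hand side becomes $15x^4-18x^2Q+4(1+c)Q^2$. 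This is a quadratic form in $(x^2,Q)$, nonnegative precisely when $18^2\le 240(1+c)$, i.e.\ $c\ge 7/20$; at $c=7/20$ it equals $\tfrac35(3Q-5x^2)^2$. This computation is the whole content of \eqref{eq:sum-upperbound}, and it is where $7/20$ comes from. Two further points:
\begin{itemize}
\item What matters is the sign at every level $\ge c$, not only at $H=c$, since an interior maximum may lie above $c$.
\item The square vanishes on $3Q=5x^2$. There a touching-point argument only yields $H''(x_0)=0$, so you genuinely need Lemma \ref{lem:MaximumPrinciple} with $C=-4Q^2\le 0$.
\end{itemize}

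\textbf{Lower bound: a missing ingredient.} The argument for $-1$ is not ``the same''. With $v=-H-1$ the right-hand side is $-15x^4+18x^2Q=3x^2(6Q-5x^2)$, which has no sign by itself. The paper closes this with Lemma \ref{lem:monotone}\ref{item:M2}: $xI_\nu(x)K_\nu(x)$ is increasing for $\nu\ge1/2$, equivalently $Q>x^2$. This is exactly where the hypothesis $\nu\ge 1/2$ enters, and your proposal uses no such a priori bound on $Q$. Without it your plan fails in two places:
\begin{itemize}
\item At a point with $H=-1$, $H'=0$, the equation gives $x^2Q\,H''=-3x^2\bigl(x^2+6(\nu^2-1)\bigr)$. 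This is positive for small $x$ when $\nu<1$, so an interior minimum at level $-1$ is not excluded.
\item At an interior minimum with value $m<-1$ one has $x^2QH''=-3x^2(6Q-5x^2)+4(1+m)Q^2$. The contradiction with $H''\ge 0$ requires $6Q\ge 5x^2$ there, which you cannot verify without a lower bound on $Q$.
\end{itemize}
With $Q>x^2$ in hand, the case $1/2\le\nu<1$ of \eqref{eq:sum-extended} is even immediate, since then $H>-\nu^2>-1$.

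\textbf{Minor slips in Step 2.} $H(+\infty)=-1/4$, not $-1/8$. Also $P'\sim -x/\bigl(2\nu(\nu^2-1)\bigr)$ as $x\to0^+$; your constant would give $Q\to 2(\nu^2-1)$ rather than $\nu^2-1$.
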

We prove these theorems in the same manner. The inequalities follow from the strong maximum principle (see Lemma \ref{lem:MaximumPrinciple}), and the sharpness follows from appropriate asymptotic formulas (see Section \ref{sec:sharp}).

Theorem \ref{thm:sum} leads us to a sharp monotonicity property of the product (Corollary \ref{cor:mono}): 
\begin{equation}
	\sqrt{x^2+\nu^2+7/20}\,I_\nu(x)K_\nu(x)
\end{equation}
is decreasing for $\nu>-1$ and
\begin{equation}
	\sqrt{x^2+\nu^2-1}\,I_\nu(x)K_\nu(x)
\end{equation}
is increasing for $\nu\ge1$, which improves the known facts that $I_\nu(x)K_\nu(x)$ is decreasing and $xI_\nu(x)K_\nu(x)$ is increasing \cite{Baricz:2009,Segura:2021}. This gives two-point inequalities for the product (Corollary \ref{cor:twopoint}) which, contrarily to the bounds for the product, are exact as the two points coalesce.

The paper is organized as follows. In Section \ref{sec:prelim} we collect the auxiliary results. Section \ref{sec:product} deals with the product and Section \ref{sec:sum} with the sum of logarithmic derivatives. In Section \ref{sec:sharp} we discuss the sharpness of the bounds by means of Debye-type expansions and the accuracy of the bounds. In Section \ref{sec:cons} we give some consequences: the sharp monotonicity properties of the product, two-point inequalities, and improved bounds for the ratios $I_{\nu-1}(x)/I_\nu(x)$ and $K_{\nu+1}(x)/K_\nu(x)$; and in Section \ref{sec:applications} we briefly mention two applications, related to the Casimir energy of a dielectric cylinder and to Schwarz domain decomposition methods.

\section{Preliminaries}\label{sec:prelim}
We will prove Theorems \ref{thm:product} and \ref{thm:sum} via the following strong maximum principle:
\begin{lemma}\label{lem:MaximumPrinciple}
	Assume that $u \in C^2((0, \infty))$ is not identically zero and satisfies the differential inequality
	\begin{equation} \label{eq:ODI}
		A(x) u'' + B(x) u' + C(x) u \geq 0
	\end{equation}
	on $(0, \infty)$, where $A$, $B$, $C$ are continuous functions satisfying $A > 0$ and $C \leq 0$ on $(0, \infty)$.
	If 
	\begin{equation} \label{eq:boundary}
		\limsup_{x \to 0} u(x) \leq 0 , \quad \limsup_{x \to \infty} u (x) \leq 0 , 
	\end{equation}
	then we have $u < 0$ on $(0, \infty)$.  
\end{lemma}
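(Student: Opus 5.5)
The plan is to prove first that $u\le 0$ on $(0,\infty)$, and then upgrade this to strict negativity using the classical Hopf strong maximum principle on bounded intervals.

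\emph{Step 1: $u\le 0$.} Suppose instead that $u(x_1)>0$ for some $x_1$. Because of the boundary conditions \eqref{eq:boundary}, there exist $0<a<x_1<b$ with $u(a)<u(x_1)/2$ and $u(b)<u(x_1)/2$. The maximum of $u$ on $[a,b]$ therefore equals some $M\ge u(x_1)>0$ and is attained at an interior point $x_0\in(a,b)$. At that point $u'(x_0)=0$ and $u''(x_0)\le 0$. Since $A>0$, $C\le0$ and $u(x_0)>0$, the left-hand side of \eqref{eq:ODI} at $x_0$ satisfies
\[
A(x_0)u''(x_0)+C(x_0)u(x_0)\le 0 .
\]
This alone gives no contradiction, since equality is possible. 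Instead we apply the one-dimensional Hopf/E.~Hopf strong maximum principle on $[a,b]$. The operator $Lu=u''+(B/A)u'+(C/A)u$ has continuous, hence bounded, coefficients on $[a,b]$, and $Lu\ge0$ there. A function satisfying $Lu\ge0$ with $C\le 0$ that attains a nonnegative maximum at an interior point must be constant on $[a,b]$. This contradicts $u(a)<M$, so $u\le 0$ everywhere.

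\emph{Step 2: strict negativity.} Suppose $u(x_0)=0$ for some $x_0>0$. Then $x_0$ is an interior nonnegative maximum on any interval $[a,b]\ni x_0$, so the strong maximum principle forces $u\equiv 0$ on $[a,b]$. Since $a$ and $b$ are arbitrary, $u\equiv0$ on $(0,\infty)$, which contradicts the hypothesis that $u$ is not identically zero. Hence $u<0$.

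\emph{Main obstacle.} The delicate point is the strong maximum principle itself, in case we want to prove it rather than cite it. I would first try to reduce it to the standard Hopf comparison argument. Suppose $u$ attains a maximum $M\ge0$ at $x_0$ but is not constant near $x_0$, say $u(x_2)<M$ for some $x_2>x_0$, after shifting to the first point where $u$ leaves $M$. Take the barrier
\[
w(x)=e^{-\alpha (x-x_2)^2}-e^{-\alpha \delta^2}
\]
on a small interval, choosing $\alpha$ large so that $Lw>0$; this uses the boundedness of $B/A$ and $C/A$. Then the function $u+\varepsilon w$ violates the weak maximum principle at an interior point.

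An alternative one-dimensional route avoids barriers altogether. On the set where $u>0$ we have $u''+(B/A)u'\ge -(C/A)u\ge0$. Writing this as $(p u')'\ge 0$ with integrating factor
\[
p=\exp\Big(\int B/A\Big),
\]
shows that $pu'$ is nondecreasing. Consequently $u$ cannot have an interior positive maximum on a component of $\{u>0\}$ unless it is constant there, and constancy contradicts the boundary values. The case of a zero maximum is handled by local uniqueness for the ODE obtained by setting $Lu=f\ge0$ with $u(x_0)=u'(x_0)=0$. I expect this zero-touching case to be the only genuinely technical step.
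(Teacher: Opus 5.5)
Your proof is correct and follows the paper's route: the paper gives no argument beyond citing the one-dimensional strong maximum principle in Protter--Weinberger (Chapter 1), and your Steps 1 and 2 are the standard way of extending that bounded-interval result to $(0,\infty)$ using the two boundary limits. One small point in your optional aside: for the zero-maximum case, uniqueness for $Lw=f$ with fixed $f\ge 0$ does not by itself force $u\equiv 0$; you need positivity of the Cauchy kernel near the diagonal (or a Gronwall estimate for $-u$), though the exponential barrier argument already covers any maximum $M\ge 0$, including $M=0$.
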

See \cite[Chapter 1]{ProtterWeinberger:1967}, for example.
In order to apply the maximum principle, we need the following elementary lemmas.
\begin{lemma}\label{lem:monotone}
	\begin{enumerate}[label=\textup{(M\arabic*)}]
		\item[]%
		\item\label{item:M1} $P_\nu(x)=I_\nu(x)K_\nu(x)$ is strictly positive, while its derivative is strictly negative on $(0,+\infty)$ for $\nu\ge -1$. In particular, the function $Q$ given in \eqref{eq:defH} is well defined, positive, and real analytic in $(0,+\infty)$.
		\item\label{item:M2} $xP_\nu(x)$ and its derivative are strictly positive on $(0,+\infty)$ for $\nu\ge 1/2$. Since $xP_\nu(x)\to 1/2$ as $x\to+\infty$, this implies $2xP_\nu(x)<1$, that is, $q(x)>x^2$, for $\nu\ge1/2$. Also, $(xP_\nu(x))'>0$ is equivalent to $Q(x) > x^2$.
	\end{enumerate}
\end{lemma}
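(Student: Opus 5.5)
For \ref{item:M1} I will use integral representations rather than the differential equation. Positivity of $P_\nu$ is immediate for $\nu\ge-1$. Indeed, $K_\nu(x)>0$ for all real $\nu$, and $I_\nu(x)>0$ for $\nu>-1$ from the power series. At $\nu=-1$ we have $I_{-1}=I_1$ and $K_{-1}=K_1$, so that case reduces to $\nu=1$.

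For the sign of the derivative I will use the Nicholson-type formula
\begin{equation}
	I_\nu(x)K_\nu(x)=\int_0^\infty J_0\bigl(2x\sinh t\bigr)\cosh(2\nu t)\,dt,
\end{equation}
but only if it converges for the whole range. If it does not, I will use the representation \cite[10.32.17]{NIST:DLMF},
\begin{equation}
	I_\nu(x)K_\nu(x)=\tfrac12\int_0^\infty e^{-t}\, J_0\bigl(\dots\bigr)\ldots ,
\end{equation}
whose exact form I would still need to pin down. Since these oscillatory kernels do not obviously give a sign, I will instead prefer the representation, valid for $\nu>-\tfrac12$,
\begin{equation}
	I_\nu(x)K_\nu(x)=\int_0^\infty K_{0}(2x\cosh t)\,\cosh(2\nu t)\,dt \quad\text{(up to normalisation)} .
\end{equation}
Here $K_0$ is positive and strictly decreasing, so differentiating under the integral gives $P_\nu'<0$ immediately; convergence at infinity comes from the exponential decay of $K_0(2x\cosh t)$.

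For $-1\le\nu\le-\tfrac12$ I would argue via the Wronskian instead. By \eqref{eq:sum}, $P_\nu'/P_\nu=\ell_I+\ell_K$. The bound $\ell_K<-\sqrt{1+\nu^2/x^2}$ is known, and $\ell_I<\sqrt{1+\nu^2/x^2}$ holds for $\nu\ge0$; together these give $\ell_I+\ell_K<0$. Since $P_\nu=P_{|\nu|}$ for integer $\nu$ only, the genuine negative range needs a separate check. There I would use $\ell_I=\nu/x+I_{\nu+1}/I_\nu$ and a Turán/Amos-type bound for $I_{\nu+1}/I_\nu$ valid for $\nu>-1$, compared against $\ell_K=-\nu/x-K_{\nu-1}/K_\nu$. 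This yields $\ell_I+\ell_K=I_{\nu+1}/I_\nu-K_{\nu-1}/K_\nu$, which is negative because $K_{\nu-1}/K_\nu=K_{1-\nu}/K_\nu\ge$ (for $1-\nu\ge\nu$, i.e.\ $\nu\le\tfrac12$) $1>I_{\nu+1}/I_\nu$. The step $I_{\nu+1}<I_\nu$ for $\nu>-1$ is classical, and $K_\mu$ is increasing in $\mu\ge0$. The same identity handles all $\nu\ge-1$ uniformly when $\nu\le\tfrac12$, while the integral formula covers $\nu>-\tfrac12$. This ordinal monotonicity argument is the main step to verify carefully.

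Real analyticity and positivity of $Q=-xP_\nu/P_\nu'$ then follow since $P_\nu$ is analytic and $P_\nu'\neq0$.

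For \ref{item:M2}, positivity of $xP_\nu$ is clear. For its derivative, note that $(xP_\nu)'>0$ is equivalent to $\ell_I+\ell_K>-1/x$. Writing $\ell_I=-\nu/x+I_{\nu-1}/I_\nu$ and $\ell_K=\nu/x-K_{\nu+1}/K_\nu$, this becomes $I_{\nu-1}/I_\nu-K_{\nu+1}/K_\nu>-1/x$. I will prove it from the known product bound $2xP_\nu<1$, equivalently $\ell_I-\ell_K>2/x$, combined with standard ratio bounds valid for $\nu\ge\tfrac12$. Concretely, I will use $K_{\nu+1}/K_\nu<(\nu+\tfrac12+\sqrt{(\nu+\tfrac12)^2+x^2})/x$ and $I_{\nu-1}/I_\nu>(\nu-\tfrac12+\sqrt{(\nu-\tfrac12)^2+x^2})/x$, both known for $\nu\ge\tfrac12$. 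Their difference exceeds $-1/x$ because $\sqrt{(\nu+\frac12)^2+x^2}-\sqrt{(\nu-\frac12)^2+x^2}\le 1$, with strict inequality for $x>0$. This is the expected obstacle: getting exactly the constant $1/x$, which these bounds achieve with equality only as $x\to0$, and the strictness is fine.

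The limit $xP_\nu\to\tfrac12$ comes from the large-$x$ asymptotics $I_\nu K_\nu\sim 1/(2x)$. An increasing function tending to $\tfrac12$ is below $\tfrac12$, which gives $q>x^2$. Finally, $(xP_\nu)'=P_\nu+xP_\nu'>0$ rearranges, since $P_\nu'<0$, to $-xP_\nu/P_\nu'>x^2\cdot\frac{1}{x}\cdot x$, that is, $Q>x^2$.
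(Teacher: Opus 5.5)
There are two genuine gaps, one in each part; the paper itself does not prove this lemma but imports it from \cite[Theorem 3]{Segura:2021}.

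\textbf{Gap in (M1), range $-1<\nu<-\tfrac12$.} First, your integral $\int_0^\infty K_0(2x\cosh t)\cosh(2\nu t)\,dt$ is not $I_\nu(x)K_\nu(x)$ up to any normalisation. Twice it equals $K_\nu(x)^2$ \cite[10.32.17]{NIST:DLMF}, which decays like $e^{-2x}$, whereas $I_\nu(x)K_\nu(x)\sim 1/(2x)$. For $\nu\ge0$ this does no damage, since your other argument, $x\ell_K<-\sqrt{x^2+\nu^2}<-x\ell_I$, already gives $P_\nu'<0$. Likewise, the identity $\ell_I+\ell_K=I_{\nu+1}/I_\nu-K_{\nu-1}/K_\nu$ together with $K_{1-\nu}\ge K_{|\nu|}$ correctly covers $-\tfrac12\le\nu\le\tfrac12$. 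But for $-1<\nu<-\tfrac12$ your key step, ``$I_{\nu+1}<I_\nu$ for $\nu>-1$ is classical'', is false. That inequality holds only for $\nu\ge-\tfrac12$: from $I_\mu(x)=\frac{e^x}{\sqrt{2\pi x}}\bigl(1-\frac{4\mu^2-1}{8x}+O(x^{-2})\bigr)$ one gets $I_{\nu+1}(x)/I_\nu(x)=1-\frac{2\nu+1}{2x}+O(x^{-2})>1$ for large $x$ when $\nu<-\tfrac12$. So the conclusion, although true there, is not proved by your chain of inequalities. A simple repair is the connection formula $I_{-\mu}=I_\mu+\frac{2}{\pi}\sin(\mu\pi)K_\mu$. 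Combined with $K_{-\mu}=K_\mu$, it gives
\[
P_{-\mu}(x)=P_\mu(x)+\frac{2}{\pi}\sin(\mu\pi)\,K_\mu(x)^2,\qquad 0<\mu<1,
\]
which is a sum of two strictly decreasing functions by the case $\nu\ge0$.

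\textbf{Gap in (M2): a sign error.} Your two ratio bounds give
\[
x(\ell_I+\ell_K)>\nu-\tfrac12+\sqrt{(\nu-\tfrac12)^2+x^2}-\nu-\tfrac12-\sqrt{(\nu+\tfrac12)^2+x^2}=-1-\delta,
\]
with $\delta=\sqrt{(\nu+\tfrac12)^2+x^2}-\sqrt{(\nu-\tfrac12)^2+x^2}\in(0,1)$. The estimate $\delta<1$ therefore only yields $x(\ell_I+\ell_K)>-2$, i.e.\ that $x^2P_\nu$ increases, not $xP_\nu$.

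This is not a bookkeeping slip but a loss of accuracy. Both ratios have the same $1/x$ term $\frac{4\nu^2-1}{8x}$, so $xI_{\nu-1}/I_\nu-xK_{\nu+1}/K_\nu=-1+O(x^{-2})$ as $x\to\infty$. Your lower bound, by contrast, is $-1-\nu/x+O(x^{-3})$. You need bounds that are exact through that term, such as the $(1,3)$ bounds of \eqref{eq:S23I}--\eqref{eq:S23K}:
\[
xI_{\nu-1}/I_\nu>\nu+\sqrt{\nu^2+x^2-x},\qquad xK_{\nu+1}/K_\nu\le\nu+\sqrt{\nu^2+x^2+x}.
\]
Their difference is $\ge-1$ iff $2x-1\le2\sqrt{\nu^2+x^2-x}$. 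For $x>\tfrac12$ this squares to $4\nu^2\ge1$, which is exactly the range $\nu\ge\tfrac12$, and strictness comes from the strict $I$-bound.

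Also, taking $2xP_\nu<1$ as a known input would be circular, since the lemma derives it from the monotonicity; fortunately you never actually use it. The remaining steps, namely $xP_\nu\to\tfrac12$ and the equivalence of $(xP_\nu)'>0$ with $Q>x^2$, are fine.
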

\begin{lemma}\label{lem:limits}
	Let $\nu > -1$. Then we have 
	\begin{equation}\label{eq:limit at infty}
		h(x)=-\frac14+\frac{3}{2} (\nu^2 - 1/4) x^{-2} + \mathcal O(x^{-4}),\qquad H(x)\to-\frac14 
	\end{equation}
	as $x \to +\infty$, and 
	\begin{equation}\label{eq:limit at zero}
		h(0^+)=\begin{cases} 0, & \nu\ge 0,\\ -\nu^2, & -1<\nu<0,\end{cases}
		\qquad
		H(0^+)=\begin{cases} -1, & \nu \geq 1 ,\\ -\nu^2, & -1<\nu< 1.\end{cases}
	\end{equation}
	as $x \to 0^+$.
\end{lemma}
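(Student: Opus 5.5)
The plan is to obtain all four limits from the classical expansions of $I_\nu$ and $K_\nu$ at $0$ and at $\infty$. The only care needed is to decide which correction term dominates in each range of $\nu$ near the origin. Throughout I write $\mu=4\nu^2$. I would also use the third-order linear ODE satisfied by every product of two solutions of the modified Bessel equation,
\begin{equation}
x^2y'''+3xy''+(1-4x^2-4\nu^2)y'-4xy=0,
\end{equation}
as a consistency check on coefficients.

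\emph{Behaviour at $+\infty$.} I would start from the known large-$x$ expansion of the product (see \cite{NIST:DLMF}, \S10.40),
\begin{equation}
P_\nu(x)\sim\frac{1}{2x}\Bigl(1-\frac{a}{x^2}+\frac{b}{x^4}-\cdots\Bigr),\qquad a=\frac{\mu-1}{8},\quad b=\frac{3(\mu-1)(\mu-9)}{128}.
\end{equation}
Squaring and inverting gives $q(x)=x^2+2a+(3a^2-2b)x^{-2}+\mathcal O(x^{-4})$. Hence $h=2a-\nu^2+(3a^2-2b)x^{-2}+\mathcal O(x^{-4})$, with $2a-\nu^2=-1/4$ and $3a^2-2b=3(\mu-1)/8=\tfrac32(\nu^2-1/4)$, which is the claim.

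For $H$ I need $P_\nu'/P_\nu=-1/x+2a/x^3+\mathcal O(x^{-5})$. This can be justified in either of two ways. One is to differentiate the expansion, which is legitimate because $\ell_I$ and $\ell_K$ have their own asymptotic expansions from those of $I_\nu'$ and $K_\nu'$ in \cite{NIST:DLMF}. The other is to insert the expansion into the ODE above. Then $Q=x^2/(1-2a/x^2+\dots)=x^2+2a+o(1)$, so $H\to 2a-\nu^2=-1/4$.

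\emph{Behaviour at $0^+$.} I would split into cases.
\begin{itemize}
\item For $\nu>0$, $I_\nu\sim (x/2)^\nu/\Gamma(\nu+1)$ and $K_\nu\sim\tfrac12\Gamma(\nu)(2/x)^\nu$. So $P_\nu\to1/(2\nu)$, $q\to\nu^2$ and $h\to0$.
\item For $\nu=0$, $P_0\sim-\log x\to\infty$, so $q\to0$ and $h\to0$.
\item For $-1<\nu<0$, I use $K_\nu=K_{-\nu}$ to get $P_\nu\sim Cx^{2\nu}$ with $C>0$. Hence $q\to0$ and $h\to-\nu^2$.
\end{itemize}

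The main obstacle is $H(0^+)$, since it depends on the first non-constant term of $P_\nu$. For $\nu\notin\mathbb Z$ I would write
\begin{equation}
P_\nu=\frac{\pi}{2\sin\nu\pi}\bigl(I_\nu I_{-\nu}-I_\nu^2\bigr).
\end{equation}
Here $I_\nu I_{-\nu}$ is an even power series $c_0+c_2x^2+\cdots$ and $I_\nu^2\asymp x^{2\nu}$. The cases are then as follows.
\begin{itemize}
\item For $\nu>1$, the $x^2$ term dominates. The ODE at order $x$ gives $c_2(8-8\nu^2)=4c_0$, so $P_\nu=\frac1{2\nu}-\frac{x^2}{4\nu(\nu^2-1)}+o(x^2)$. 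Thus $P_\nu'/P_\nu\sim -x/(\nu^2-1)$, $Q\to\nu^2-1$ and $H\to-1$. The same holds for integer $\nu\ge2$ using the standard series of $K_n$, where the logarithmic terms appear only at higher order.
\item For $0<\nu<1$, the term $-c\,x^{2\nu}$ with $c>0$ dominates. Then $P_\nu'/P_\nu\asymp -x^{2\nu-1}$, so $Q\asymp x^{2-2\nu}\to0$ and $H\to-\nu^2$.
\item For $\nu=1$, the correction is of order $x^2\log x$, so $Q\asymp 1/\log(1/x)\to0$ and $H\to-1=-\nu^2$.
\item For $\nu=0$, $P_0'/P_0\sim1/(x\log x)$, so $Q\to0$.
\item For $-1<\nu<0$, $P_\nu'/P_\nu\sim2\nu/x$, so $Q\sim x^2/(-2\nu)\to0$.
\end{itemize}
In every case with $\nu<1$ this gives $H(0^+)=-\nu^2$. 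Checking the signs of the leading correction in each case, and the borderline cases $\nu=0,1$, is where I expect most of the care to go.
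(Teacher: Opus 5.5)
Your proposal is correct. At $+\infty$ it follows the same route as the paper: the product expansion \cite[10.40.6]{NIST:DLMF} together with the expansions of $I_\nu'$ and $K_\nu'$. You also supply the coefficient computation $2a-\nu^2=-\tfrac14$, $3a^2-2b=\tfrac32(\nu^2-\tfrac14)$ that the paper leaves implicit. Only your first justification for differentiating the expansion is a genuine proof, namely using the separate expansions of $\ell_I$ and $\ell_K$. Substituting the expansion into the third-order ODE is a consistency check, but it does not by itself show that $P_\nu'$ has the termwise-differentiated expansion. Your treatment of $h(0^+)$ is the same as the paper's.

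For $H(0^+)$ you take a genuinely different route.
\begin{itemize}
\item \emph{Your route.} You expand $P_\nu$ itself to its first non-constant term. For non-integer $\nu$ you use $K_\nu=\frac{\pi}{2}(I_{-\nu}-I_\nu)/\sin\nu\pi$, and for integer orders the logarithmic series. You then differentiate termwise, which is legitimate because these series converge. You must decide case by case which sub-leading term dominates: $x^2$ versus $x^{2\nu}$, plus the logarithmic cases $\nu=0,1$ and integers $\nu\ge 2$.
\item \emph{The paper's route.} It writes $(\log P_\nu)'=I_{\nu+1}/I_\nu-K_{\nu-1}/K_\nu$ via the recurrences. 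It then needs only the \emph{leading} small-$x$ behaviour of each individual function: $I_{\nu+1}/(xI_\nu)\to 1/(2(\nu+1))$, while $K_{\nu-1}/(xK_\nu)\to 1/(2(\nu-1))$ for $\nu>1$ and $\to\infty$ for $-1<\nu\le 1$.
\end{itemize}

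The paper's argument is shorter and uniform in $\nu$. It needs no integer/non-integer split, no logarithmic series for $K_n$, no separate handling of $\nu=0$ and $\nu=1$, and no second-order information about $P_\nu$.

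Your computation, in exchange, gives more. It yields the explicit expansion $P_\nu=\frac{1}{2\nu}-\frac{x^2}{4\nu(\nu^2-1)}+o(x^2)$ for $\nu>1$, and it identifies the competing term $-c\,x^{2\nu}$ for $0<\nu<1$. This makes transparent why $\nu=1$ is the threshold between $H(0^+)=-1$ and $H(0^+)=-\nu^2$: it is exactly where the $x^{2\nu}$ correction stops dominating the $x^2$ one.
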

We use Lemmas \ref{lem:monotone} and \ref{lem:limits} to establish the differential inequality \eqref{eq:ODI} and to verify the boundary condition \eqref{eq:boundary}.
See \cite[Theorem 3 and its proof]{Segura:2021} for Lemma \ref{lem:monotone} (see also \cite{Baricz:2009} for earlier results).
Lemma \ref{lem:limits} can be shown by using some standard formulas for modified Bessel functions as follows.
\begin{proof}[Proof of Lemma \ref{lem:limits}]
	The large-argument behavior of $h$ and $H$ shown in \eqref{eq:limit at infty} follows from the standard large-argument asymptotic formulas for the modified Bessel functions and their derivatives; see \cite[10.40.1--4, 10.40.6]{NIST:DLMF}.
	The limit of $h$ as $x\to0^+$ in \eqref{eq:limit at zero} is also immediate from \cite[10.27.3, 10.30.1--3]{NIST:DLMF}
	\begin{equation}
		I_\nu(x) \sim \frac{x^\nu}{ 2^\nu \Gamma(\nu + 1) } , \qquad 
		K_\nu(x) \sim 
		\begin{cases}
			2^{ \lvert\nu\rvert - 1} \Gamma(\lvert \nu \rvert) x^{- \lvert \nu \rvert} , & \nu \ne 0 , \\
			- \log x , & \nu = 0 .
		\end{cases}
	\end{equation}
	
	For $H(0^+)$, the derivative formulas \cite[10.29.2]{NIST:DLMF} give
	\[
	\frac{d}{dx} \log (I_\nu(x) K_\nu(x))
	=\frac{I_{\nu+1}(x)}{I_\nu(x)}
	-\frac{K_{\nu-1}(x)}{K_\nu(x)} ,
	\]
	and the small-argument limits
	\cite[10.27.3, 10.30.1--3]{NIST:DLMF} imply
	\[
	\frac{I_{\nu+1}(x)}{x I_\nu(x)}
	\longrightarrow 
	\frac{1}{2 (\nu+1)},
	\qquad
	\frac{K_{\nu-1}(x)}{x K_\nu(x)}
	\longrightarrow
	\begin{cases}
		\dfrac{1}{2(\nu-1)}, & \nu > 1 , \\
		\infty, & -1 < \nu \leq 1 
	\end{cases}
	\]
	as $x \to 0^+$.
	Combining these, we obtain
	\begin{equation}
		\frac{1}{x} \frac{d}{dx} \log (I_\nu(x) K_\nu(x))
		\longrightarrow 	
		\begin{cases}
			- \dfrac{1}{\nu^2 - 1}, & \nu > 1 , \\
			- \infty, & -1 < \nu \leq 1 ,
		\end{cases}
	\end{equation}
	so that $H(x) = -x/((\log (I_\nu(x) K_\nu(x)))') - x^2 - \nu^2$ satisfies \eqref{eq:limit at zero}, as desired.
\end{proof}

\section{\texorpdfstring{The product $I_\nu(x)K_\nu(x)$}{The product}}\label{sec:product}
We recall the differential equation satisfied by $q = (2 I_\nu K_\nu)^{-2}$ \cite[Lemma 3]{SeguraSuzuki:2026}:
\begin{equation} \label{eq:odeq}
	4x^2 q q'' + 4x q q' - 5 x^2 (q')^2 = 16 q^2 (q - x^2 - \nu^2) .
\end{equation}
Substituting $h(x) = q(x) - x^2 - \nu^2$ into \eqref{eq:odeq}, we obtain the following immediately.
\begin{lemma}\label{lem:odeh}
	The function $h(x) = q(x) - x^2 - \nu^2$ solves the differential equation
	\begin{equation}\label{eq:odeh}
		A(x) h'' + B(x) h' + C(x) h = 20 x^4 - 16 x^2 q 
	\end{equation}
	on $(0, \infty)$, where
	\begin{equation} \label{eq:ABCh}
		A(x) = 4x^2 q, \quad 
		B(x) = 4xq - 5 x^2 q' - 10 x^3, \quad
		C(x) = - 16 q^2 ,
	\end{equation}
	which are continuous and satisfy $A > 0$ and $C \leq 0$.
\end{lemma}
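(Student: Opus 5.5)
The plan is a direct substitution into \eqref{eq:odeq}, followed by a check of the sign and continuity conditions using Lemma \ref{lem:monotone}. No analytic estimate is needed; the only care required is to keep the mixed term $(q')^2$ partly in terms of $q'$ so that the coefficient $B$ comes out in the stated form.

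Writing $q=h+x^2+\nu^2$, we have $q'=h'+2x$ and $q''=h''+2$. I substitute these into the left-hand side of \eqref{eq:odeq}, treating the three terms separately:
\begin{align}
4x^2 q q'' &= 4x^2 q\,h'' + 8x^2 q,\\
4xqq' &= 4xq\,h' + 8x^2 q,\\
-5x^2 (q')^2 &= -5x^2 q'(h'+2x) = -5x^2 q' h' - 10x^3 q'.
\end{align}
In the last term I replace only the final factor $q'$ by $h'+2x$. This gives $-10x^3 q' = -10x^3 h' - 20x^4$, while the factor $q'$ multiplying $h'$ is left untouched.

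Collecting terms, the left-hand side becomes
\begin{equation}
4x^2 q\,h'' + \bigl(4xq - 5x^2 q' - 10x^3\bigr) h' + 16x^2 q - 20x^4 .
\end{equation}
The right-hand side of \eqref{eq:odeq} is $16q^2 h$. Moving it to the left and moving $16x^2q-20x^4$ to the right gives exactly \eqref{eq:odeh} with $A$, $B$, $C$ as in \eqref{eq:ABCh}. This bookkeeping is the only place where an error could creep in, and it is the step I would double-check.

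For the coefficient properties, take $\nu>-1$. By \ref{item:M1}, $P_\nu$ is real analytic and strictly positive on $(0,\infty)$. Hence $q=(2P_\nu)^{-2}$ is real analytic and positive there, so $q$ and $q'$ are continuous, and therefore $A$, $B$, $C$ are continuous. Moreover $A=4x^2q>0$ for $x>0$, and $C=-16q^2\le 0$ trivially (in fact $C<0$).
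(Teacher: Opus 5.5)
Your proposal is correct and follows the paper's own route: the paper just says that substituting $h=q-x^2-\nu^2$ into \eqref{eq:odeq} gives \eqref{eq:odeh} immediately. Your term-by-term bookkeeping, including the split $-5x^2(q')^2=-5x^2q'h'-10x^3q'$, checks out, and positivity of $P_\nu$ from \ref{item:M1} gives $A>0$ and $C\le 0$ as you say.
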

Using this identity, Theorem \ref{thm:product} can be proved via the maximum principle (Lemma \ref{lem:MaximumPrinciple}) as follows.
\begin{proof}[Proof of Theorem \ref{thm:product}]
	Let $u(x) \coloneqq h(x) - 1/5$ and $v(x) \coloneqq - h(x) - 1/4$. 
	Then it suffices to show that $u < 0$ holds on $(0, \infty)$ when $\nu > -1$, and $v < 0$ holds on $(0, \infty)$ when $\nu \geq 1/2$.
	Thanks to Lemma \ref{lem:limits}, we know that $u$ and $v$ are not identically zero and satisfy
	$u(0^+)\le 0$ and $u(+\infty)\leq 0$ for $\nu > -1$, and $v(0^+)\le 0$ and $v(+\infty)\leq 0$ for $\nu \geq 1/2$.
	Moreover, using Lemma \ref{lem:odeh}, we obtain
	\begin{align*}
		A(x) u'' + B(x) u' + C(x) u
		\underset{\eqref{eq:odeh}}&= 20 x^4 - 16 x^2 q +\frac{16}{5} q^2  \\
		&= \frac{4}{5}(2q - 5x^2)^2 \geq 0 
	\end{align*}
	on $(0, \infty)$ for each $\nu > -1$.
	Similarly, using Lemmas \ref{lem:monotone} and \ref{lem:odeh}, we get
	\begin{align*}
		A(x) v'' + B(x) v' + C(x) v
		\underset{\eqref{eq:odeh}}&= - 20 x^4 + 16 x^2 q + 4q^2  \\
		&= 4 (q + 5x^2)(q - x^2) \underset{\text{\ref{item:M2}}}\geq 0 
	\end{align*}
	on $(0, \infty)$ for each $\nu \geq 1/2$.
	Hence, Theorem \ref{thm:product} follows from Lemma \ref{lem:MaximumPrinciple}.
\end{proof}
\begin{remark} \label{remark:product-upperbound}
	The constant $-1/4$ in \eqref{eq:product-upperbound} is best possible for every $\nu\ge1/2$, because $h(+\infty)=-1/4$. The range $\nu\ge1/2$ is also optimal: \eqref{eq:limit at infty} shows that $h(x)<-1/4$ for large $x$ when $-1/2 < \nu < 1/2$, and \eqref{eq:limit at zero} shows that $h(x)<-1/4$ for small $x$ when $-1 < \nu < -1/2$. Moreover, we have $I_{-1/2}(x) K_{-1/2}(x) = (1 + e^{-2x})/(2x)$, so that $h(x) < -1/4$ holds for every $x$ when $\nu = -1/2$. For instance, numerically $\inf_{x>0}h(x)\approx-0.3081$ for $\nu=0$.
\end{remark}

\section{The sum of logarithmic derivatives}\label{sec:sum}
We recall the differential equation satisfied by $P_\nu = I_\nu K_\nu$ \cite[Eq.~(10)]{SeguraSuzuki:2026}:
\begin{equation} \label{eq:odeP}
	(P_\nu')^2 - \frac{1}{x^2} 
	= 
	2 P_\nu P_\nu'' 
	+ \frac{2}{x} P_\nu P_\nu' 
	- 4\mleft( 1 + \frac{\nu^2}{x^2} \mright) P_\nu^2 .
\end{equation}
Using this equation, one can show the following.
\begin{lemma}\label{lem:odeH}
	The function $H(x)= Q(x) - x^2 - \nu^2$ solves the differential equation
	\begin{equation}\label{eq:odeH}
		A(x) H'' + B(x) H' + C(x) H = 15 x^4 - 18 x^2 Q + 4 Q^2
	\end{equation}
	on $(0, \infty)$, where $Q(x) = - x P_\nu(x) / P_\nu'(x)$ as in \eqref{eq:defH}, and
	\begin{equation} \label{eq:ABCH}
		A(x) = x^2 Q , \quad 
		B(x) = 5x Q - 2x^2 Q' - 7x^3 , \quad
		C(x) = - 4Q^2 ,
	\end{equation}
	which are continuous and satisfy $A > 0$ and $C \leq 0$.
\end{lemma}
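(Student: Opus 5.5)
The plan is to turn the first-order-in-$P_\nu$ identity \eqref{eq:odeP} into an equation for the logarithmic derivative, then for $Q$, and finally to substitute $Q=H+x^2+\nu^2$.

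\textbf{Step 1: an equation for $y$.} Set $y=P_\nu'/P_\nu$, so that $y=-x/Q$ and $P_\nu''/P_\nu=y'+y^2$. Dividing \eqref{eq:odeP} by $P_\nu^2$ gives
\[
\frac{1}{x^2P_\nu^2}=R,\qquad R\coloneqq -y^2-2y'-\frac{2y}{x}+4+\frac{4\nu^2}{x^2}.
\]
This is first order in $y$. To get a closed equation I differentiate it logarithmically: since $(\log(x^{-2}P_\nu^{-2}))'=-2/x-2y$, we obtain
\[
R'=-2\Bigl(\frac1x+y\Bigr)R,
\]
which is second order in $y$ and no longer involves $P_\nu$.

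\textbf{Step 2: rewrite in terms of $Q$.} Substituting $y=-x/Q$ and $y'=-1/Q+xQ'/Q^2$, and multiplying by $x^2Q^2$, I expect
\[
S\coloneqq x^2Q^2R=4Q^2(x^2+\nu^2)+4x^2Q-x^4-2x^3Q'.
\]
Then $S'/S=2/x+2Q'/Q+R'/R=2(Q'+x)/Q$, so $Q$ satisfies
\[
Q\,S'=2(Q'+x)\,S.
\]
In this equation $S'$ contains $-2x^3Q''$, so after multiplying through it has leading term $-2x^3QQ''$.

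\textbf{Step 3: substitute $Q=H+x^2+\nu^2$.} This gives $Q'=H'+2x$ and $Q''=H''+2$. To eliminate $\nu$, I write $x^2+\nu^2=Q-H$ wherever it occurs, for instance $4Q^2(x^2+\nu^2)=4Q^2(Q-H)$ in $S$ and correspondingly in $S'$. Expanding and dividing by the common factor ($-2x$, as the leading term suggests) should give the left-hand side $x^2QH''+(5xQ-2x^2Q'-7x^3)H'-4Q^2H$. The $H$-free remainder should be $15x^4-18x^2Q+4Q^2$. One has to be careful here: $Q'$ is kept as a coefficient in $B$ rather than being expanded, so the bookkeeping must consistently treat the terms $Q'H'$ versus $Q'\cdot 2x$.

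\textbf{Step 4: coefficient properties.} Continuity and the sign conditions follow from \ref{item:M1}. There $Q$ is positive and real analytic on $(0,\infty)$ for $\nu>-1$, hence $A=x^2Q>0$ and $C=-4Q^2\le0$, and $B$ is continuous.

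\textbf{Main obstacle.} The hard part is the algebra in Step 3: ensuring that every occurrence of $\nu^2$ cancels and that the $H$-free terms collapse exactly to the stated quadratic $15x^4-18x^2Q+4Q^2$. I would organize the check by collecting terms separately by powers of $H$ and by their order in derivatives.
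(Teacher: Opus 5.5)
Your proposal is correct and follows essentially the paper's route: differentiating $R=1/(x^2P_\nu^2)$ logarithmically is the same as the paper's step of multiplying \eqref{eq:odeP} by $x^2$ and differentiating to obtain \eqref{eq:odeP-3rd-order}, only written in the variable $y=P_\nu'/P_\nu$ rather than via $P_\nu'''/P_\nu$. Your $S$ and the relation $QS'=2(Q'+x)S$ are right, and dividing the latter by $-2x$ gives exactly the paper's intermediate identity $x^2QQ''+(5xQ-2x^2Q'-3x^3)Q'-4Q^2(Q-x^2-\nu^2)=x^4-6x^2Q+4Q^2$; substituting $Q'=H'+2x$ and $Q''=H''+2$ there, with $-4x^3Q'$ split as $-4x^3H'-8x^4$ as you anticipated, yields \eqref{eq:odeH}.
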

\begin{proof}[Proof of Lemma \ref{lem:odeH}]
	Multiplying \eqref{eq:odeP} by $x^2$ and differentiating,
	we have
	\begin{equation} \label{eq:odeP-3rd-order}
		x^2 P_\nu''' + 3x P_\nu'' + (1 - 4(x^2+\nu^2)) P_\nu' - 4x P_\nu = 0.
	\end{equation}
	On the other hand, differentiating $P_\nu'=-xP_\nu/Q$ directly, we get
	\begin{align}
		\frac{P_\nu''}{P_\nu}
		&=\frac{x^2+xQ'-Q}{Q^2},\\
		\frac{P_\nu'''}{P_\nu}
		&=\frac{xQQ''+(2Q-2xQ'-3x^2)Q'+3xQ-x^3}{Q^3}.
	\end{align}
	Substituting
	these into \eqref{eq:odeP-3rd-order}, we obtain
	\begin{align}
		x^2 Q Q'' + (5xQ-2x^2Q'-3x^3)Q' - 4Q^2(Q-x^2-\nu^2) = x^4 - 6x^2 Q + 4Q^2 ,
	\end{align}
	which is equivalent to \eqref{eq:odeH} since $H = Q - x^2 - \nu^2$.
\end{proof}
Now we prove Theorem \ref{thm:sum}.
\begin{proof}[Proof of Theorem \ref{thm:sum}]
	Let $u(x) \coloneqq H(x) - 7/20$ and $v(x) \coloneqq - H(x) - 1$. 
	Then it suffices to show that $u < 0$ holds on $(0, \infty)$ when $\nu > -1$, and $v < 0$ holds on $(0, \infty)$ when $\nu \geq 1/2$.
	Thanks to Lemma \ref{lem:limits}, we know that $u$ and $v$ are not identically zero and satisfy
	$u(0^+)\le 0$ and $u(+\infty)\leq 0$ for $\nu > -1$, and $v(0^+)\le 0$ and $v(+\infty)\leq 0$ for $\nu \geq 1/2$.
	Moreover, using Lemma \ref{lem:odeH}, we obtain
	\begin{align*}
		A(x) u'' + B(x) u' + C(x) u \underset{\eqref{eq:odeH}}&= 15x^4 - 18x^2 Q + \frac{27}{5} Q^2 \\
		&= \frac{3}{5} (3Q - 5x^2)^2 \geq 0 
	\end{align*}
	on $(0, \infty)$ for each $\nu > -1$.
	Similarly, using Lemmas \ref{lem:monotone} and \ref{lem:odeH}, we get
	\begin{align*}
		A(x) v'' + B(x) v' + C(x) v 
		\underset{\eqref{eq:odeH}}&= - 15x^4 + 18x^2 Q \\
		&= 3 x^2 (6 Q - 5 x^2) \underset{\text{\ref{item:M2}}}\geq 0 
	\end{align*}
	on $(0, \infty)$ for each $\nu \geq 1/2$.
	Hence, Theorem \ref{thm:sum} follows from Lemma \ref{lem:MaximumPrinciple}.
\end{proof}
\begin{remark} \label{remark:sum-lowerbound}
	The constant $-1$ in \eqref{eq:sum-lowerbound} is best possible for every $\nu\ge1$, because $H(0^+)=-1$. The range $\nu\ge1$ is also optimal: if $- 1< \nu < 1$, then the right-hand side of \eqref{eq:sum-lowerbound} is positive for small $x$, while the left-hand side is negative. On the other hand, we do not know whether the range $\nu \geq 1/2$ for \eqref{eq:sum-extended} is optimal or not. Indeed, numerical calculation suggests that it is still valid for $-1 < \nu < 1/2$.
\end{remark}

\section{Sharpness}\label{sec:sharp}
As we saw in Remarks \ref{remark:product-upperbound} and \ref{remark:sum-lowerbound}, the constants $-1/4$ and $-1$ in \eqref{eq:product-upperbound} and \eqref{eq:sum-lowerbound} are optimal for each fixed $\nu$ because $h(+\infty)=-1/4$ and $H(0^+)=-1$, respectively. The constants $1/5$ and $7/20$ in \eqref{eq:product-lowerbound} and \eqref{eq:sum-upperbound} are not attained for fixed $\nu$, but are approached as $\nu\to+\infty$ along a ray $x=\nu z$. This can be seen from the Debye-type expansions \cite[10.41.3--6]{NIST:DLMF}, \cite[Theorem 3]{SeguraSuzuki:2026}
\begin{alignat}{1}
	I_\nu(\nu z)K_\nu(\nu z) 
	&= \frac{p}{2\nu} \mleft( 1 + \frac{a_2(p)}{\nu^2} +\mathcal O(\nu^{-4})\mright),
	\label{eq:debye} \\
	\frac{d}{dz} (I_\nu(\nu z)K_\nu(\nu z))
	&=- \frac{z p^3}{2\nu}
	\mleft(
	1+\frac{a_2(p) + p (d/dp) a_2(p)}{\nu^2}
	+\mathcal O(\nu^{-4})
	\mright)
	\label{eq:debye-derivative}
\end{alignat}
as $\nu \to \infty$ for fixed $z > 0$, where
\[
p=\frac{1}{\sqrt{1+z^2}} , \qquad a_2(p)=\frac{1}{8}p^2(p^2-1)(5p^2-1).
\]
Substituting \eqref{eq:debye} and \eqref{eq:debye-derivative} into the definitions of $h$ and $H$, we obtain
\begin{alignat}{4}
	h(\nu z)& ={} &h_0(p) &+ \mathcal O(\nu^{-2}), \qquad &h_0(p)&{}={}& & \frac{1}{4}(1-p^2)(5p^2-1), 
	\label{eq:h0}\\
	H(\nu z)& ={} &H_0(p) &+ \mathcal O(\nu^{-2}), \qquad &H_0(p)&{}={}&-& \frac{1}{4}(15p^4-12p^2+1), 
	\label{eq:H0}
\end{alignat}
as $\nu \to \infty$ for each fixed $z > 0$.

The function $h_0(p)$ attains its maximum $1/5$ in $[0,1]$ at $p^2=3/5$, that is, $z=\sqrt{2/3}$, and $H_0(p)$ attains its maximum $7/20$ at $p^2=2/5$, that is, $z=\sqrt{3/2}$. Therefore, we have
\[
\lim_{\nu\to+\infty}\sup_{x>0}h(x)=\frac15,\qquad \lim_{\nu\to+\infty}\sup_{x>0}H(x)=\frac7{20} .
\]
Table \ref{tab:max} shows the maximum values of $h$ and $H$ for several values of $\nu$, and the corresponding abscissas.

\begin{table}[ht]
	\centering
	\begin{tabular}{rcccc}
		\toprule
		$\nu$ & $\max_{x>0}h(x)$ & $x_{\max}/\nu$ & $\max_{x>0}H(x)$ & $x_{\max}/\nu$\\
		\midrule
		0 & 0.03995 & -- & 0.07325 & -- \\
		1/2 & 0.12737 & 1.1514 & 0.20716 & 2.1491\\
		1 & 0.16079 & 0.9531 & 0.26860 & 1.6200\\
		2 & 0.18428 & 0.8657 & 0.31662 & 1.3724\\
		5 & 0.19677 & 0.8267 & 0.34334 & 1.2554\\
		10 & 0.19915 & 0.8192 & 0.34829 & 1.2329\\
		50 & 0.19997 & 0.8166 & 0.34993 & 1.2251\\
		$\infty$ & 1/5 & $\sqrt{2/3}\approx0.8165$ & 7/20 & $\sqrt{3/2}\approx 1.2247$\\
		\bottomrule
	\end{tabular}
	\caption{Maximum values of $h(x)$ and $H(x)$ and the ratio $x/\nu$ where they are attained.}
	\label{tab:max}
\end{table}

\subsection{Accuracy of the bounds for the sum}\label{sec:accsum}

The bounds of Theorem \ref{thm:sum} are not only simple but also uniformly accurate: since $H$ is uniformly bounded, the relative error of both bounds is $\mathcal O(\nu^{-2})$ as $\nu\to+\infty$, uniformly in $x>0$. It is worth comparing them with the bounds that can be obtained from bounds for the ratios. We write
\begin{equation*}
	\phi_I(x) \coloneqq xI_{\nu-1}(x)/I_\nu(x), \qquad 
	\phi_K(x) \coloneqq xK_{\nu+1}(x)/K_\nu(x), 
\end{equation*}
so that
\begin{equation}
	\frac{d}{dx} \log (I_\nu(x) K_\nu(x)) = \frac{\phi_I(x)-\phi_K(x)}{x} .
\end{equation}
Recall that we have
\begin{equation}
	- 1 < x \frac{d}{dx} \log (I_\nu(x) K_\nu(x)) < 0
\end{equation}
for $\nu \geq 1/2$ and $x > 0$ thanks to Lemma \ref{lem:monotone}, while $\phi_I(\nu z)$ and $\phi_K(\nu z)$ are of order $\nu$ as $\nu \to \infty$ for fixed $z > 0$ due to Debye's asymptotic formulas \cite[10.41.3--6]{NIST:DLMF}. Thus, bounds for $\tfrac{d}{dx} \log (I_\nu(x) K_\nu(x))$ obtained as differences of bounds for the ratios suffer from cancellation. Table \ref{tab:comp} compares the relative errors of the bounds of Theorem \ref{thm:sum} with the best bounds obtained from the irrational bounds for the ratios of the form $\alpha+\sqrt{\beta^2+\gamma^2x^2}$ collected in \cite[Table 1]{SeguraSuzuki:2026}, which are of comparable simplicity.

\begin{table}[ht]
	\centering
	\begin{tabular}{ccccc}
		\toprule
		& \multicolumn{2}{c}{Theorem \ref{thm:sum}} & \multicolumn{2}{c}{From \cite[Table 1]{SeguraSuzuki:2026}}\\
		$x/\nu$ & lower & upper & lower & upper\\
		\midrule
		0.5 & $6.1\times10^{-3}$ & $4.7\times10^{-3}$ & $6.1\times10^{-2}$ & $9.6\times10^{-3}$\\
		$\sqrt{2/3}$ & $7.2\times10^{-3}$ & $9.2\times10^{-4}$ & $1.3\times10^{-1}$ & $4.7\times10^{-2}$\\
		$\sqrt{3/2}$ & $5.4\times10^{-3}$ & $7.0\times10^{-6}$ & $2.3\times10^{-1}$ & $1.5\times10^{-1}$\\
		3 & $1.0\times10^{-3}$ & $3.4\times10^{-4}$ & $1.1\times10^{-1}$ & $2.4\times10^{-1}$\\
		20 & $1.9\times10^{-5}$ & $1.5\times10^{-5}$ & $2.5\times10^{-3}$ & $4.8\times10^{-2}$\\
		\bottomrule
	\end{tabular}
	\caption{Relative errors of the lower and upper bounds for $x\mleft(I_\nu'(x)/I_\nu(x)+K_\nu'(x)/K_\nu(x)\mright)$ for $\nu=10$.}
	\label{tab:comp}
\end{table}

The bounds of Theorem \ref{thm:sum} are typically one or two orders of magnitude sharper. More accurate bounds for the ratios, such as the optimized bounds of \cite[Theorems 6 and 8]{SeguraSuzuki:2026}, lead to sharper estimates in part of the range, at the price of much more complicated expressions; even so, in a neighbourhood of the direction $x=\sqrt{3/2}\,\nu$ the upper bound of Theorem \ref{thm:sum} is the sharpest one (for $\nu=10$ and $x=\sqrt{3/2}\,\nu$ its relative error is $7.0\times10^{-6}$, while the best bound obtained from the ratios has relative error $5.4\times10^{-4}$), and its advantage increases with $\nu$.

\section{Consequences}\label{sec:cons}

\subsection{Monotonicity of the product}\label{sec:mono}

Theorem \ref{thm:sum} implies a monotonicity property of the product. Since, for any constant $c$ with $x^2+\nu^2+c>0$,
\begin{equation}\label{eq:dermono}
	\frac{d}{dx}\log\Bigl(\sqrt{x^2+\nu^2+c}\;I_\nu(x)K_\nu(x)\Bigr)=\frac{x\,\mleft(H(x)-c\mright)}{Q(x)\,(x^2+\nu^2+c)},
\end{equation}
the sign of this derivative is the sign of $H(x)-c$, and Theorem \ref{thm:sum} gives the following result.

\begin{corollary}\label{cor:mono}
	\begin{enumerate}[label=\textup{(\roman*)}]
		\item[]%
		\item For $\nu>-1$, the function $\sqrt{x^2+\nu^2+7/20}\;I_\nu(x)K_\nu(x)$ is strictly decreasing on $(0,+\infty)$.
		\item For $\nu\ge1$, the function $\sqrt{x^2+\nu^2-1}\;I_\nu(x)K_\nu(x)$ is strictly increasing on $(0,+\infty)$.
	\end{enumerate}
	Moreover, the constants are best possible in the following sense: if $c < 7/20$, then for each sufficiently large $\nu$, the function $\sqrt{x^2+\nu^2+c}\,I_\nu(x)K_\nu(x)$ is strictly increasing on some open interval $(a,b)$, and if $c>-1$, then it is strictly decreasing near the origin for each $\nu \geq 1$.
\end{corollary}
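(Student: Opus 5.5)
The plan is to read everything off the identity \eqref{eq:dermono}, which I first verify. Writing $L(x)=\log\bigl(\sqrt{x^2+\nu^2+c}\,P_\nu(x)\bigr)$, we have $L'=x/(x^2+\nu^2+c)+P_\nu'/P_\nu$. By the definition \eqref{eq:defH}, $P_\nu'/P_\nu=-x/Q$. Hence
\[
L'=\frac{x\,(Q-x^2-\nu^2-c)}{Q\,(x^2+\nu^2+c)}=\frac{x\,(H-c)}{Q\,(x^2+\nu^2+c)}.
\]
By \ref{item:M1}, $Q>0$ on $(0,\infty)$ for $\nu>-1$, so whenever $x^2+\nu^2+c>0$ the sign of $L'$ equals the sign of $H(x)-c$.

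For part (i), take $c=7/20$. Then $x^2+\nu^2+c>0$ trivially, and \eqref{eq:sum-upperbound} is exactly the statement $H<7/20$ on $(0,\infty)$ for $\nu>-1$. Hence $L'<0$ everywhere, and the function is strictly decreasing. For part (ii), take $c=-1$ with $\nu\ge1$. Then $x^2+\nu^2-1\ge x^2>0$ for $x>0$, and \eqref{eq:sum-lowerbound} gives $H>-1$, so $L'>0$ and the function is strictly increasing.

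For the sharpness in the first case, let $c<7/20$. By Section \ref{sec:sharp}, $\lim_{\nu\to\infty}\sup_{x>0}H(x)=7/20>c$. Concretely, \eqref{eq:H0} at $z=\sqrt{3/2}$ gives $H(\nu\sqrt{3/2})\to7/20$. So for all sufficiently large $\nu$ there is a point $x_0$ with $H(x_0)>c$. For such $\nu$ we also have $\nu^2+c>0$, so the argument of the square root is positive on all of $(0,\infty)$. Since $H$ is continuous (indeed real analytic, by \ref{item:M1}), $H>c$ on an open interval $(a,b)$ containing $x_0$. On that interval $L'>0$, so the function is strictly increasing there.

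For the sharpness in the second case, let $c>-1$ and $\nu\ge1$. Lemma \ref{lem:limits} gives $H(0^+)=-1<c$, so $H<c$ on some interval $(0,\delta)$. We must also check that $x^2+\nu^2+c>0$ near the origin: this holds because $\nu^2+c>1-1=0$. Hence $L'<0$ on $(0,\delta)$, and the function is strictly decreasing near the origin.

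There is no real obstacle, since everything reduces to Theorem \ref{thm:sum}, Lemma \ref{lem:limits}, and the Debye limit \eqref{eq:H0}. The only points needing care are keeping $x^2+\nu^2+c$ positive, so that the square root is defined and the sign transfer through \eqref{eq:dermono} is valid, and confirming that the pointwise limit in \eqref{eq:H0} yields, for each large $\nu$, an actual point where $H>c$.
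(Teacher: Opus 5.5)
Your proposal is correct and follows the paper's proof: you reduce the sign of \eqref{eq:dermono} to the sign of $H-c$, use Theorem~\ref{thm:sum} for (i) and (ii), use the Debye limit of Section~\ref{sec:sharp} (at $z=\sqrt{3/2}$) when $c<7/20$, and use $H(0^+)=-1$ when $c>-1$. Your additional checks — that $x^2+\nu^2+c>0$ in each case, and that continuity of $H$ turns a single point with $H>c$ into an open interval — spell out details the paper leaves implicit.
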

\begin{proof}[Proof of Corollary \ref{cor:mono}]
	(i) and (ii) follow from \eqref{eq:dermono} and Theorem \ref{thm:sum}. If $c < 7/20$, by Section \ref{sec:sharp} we have $\sup_{x>0}H(x)>c$ for $\nu$ large enough, and the derivative \eqref{eq:dermono} is positive at some points. If $c>-1$ and $\nu \geq 1$, $H(0^+)=-1<c$ and the derivative is negative for small $x$.
\end{proof}

Corollary \ref{cor:mono} sharpens two well known monotonicity properties of the product: (i) is stronger than the fact that $I_\nu(x)K_\nu(x)$ is decreasing for $\nu>-1$ \cite{Baricz:2009,Segura:2021}, and, because $x/\sqrt{x^2+\nu^2-1}$ is positive and non-decreasing for $\nu\ge1$, (ii) is stronger than the fact that $xI_\nu(x)K_\nu(x)$ is increasing \cite{Segura:2021} when $\nu\ge1$.

\subsection{Two-point inequalities}\label{sec:twopoint}

Integrating the bounds for the logarithmic derivatives we obtain inequalities relating the values of the functions at two different points. The bounds of Theorems \ref{thm:product} and \ref{thm:sum} can be integrated in terms of elementary functions. Let
\[
\Phi_\kappa(x)=\sqrt{x^2+m^2}-m\log\frac{m+\sqrt{x^2+m^2}}{x},\qquad m=\sqrt{\nu^2+\kappa},
\]
which satisfies $\Phi_\kappa'(x)=\sqrt{x^2+\nu^2+\kappa}/x$, and for $0<a<b$ write $\Delta_\kappa(a,b)=\Phi_\kappa(b)-\Phi_\kappa(a)$.
Notice that $\Phi_0(\nu z)= \nu \eta(z)$ for $\nu, z > 0$ with 
\begin{equation}
	\eta(z)=\sqrt{1+z^2}+\log\frac{z}{1+\sqrt{1+z^2}} ,
\end{equation}
which is the variable appearing in the Debye expansions \cite[10.41.3--4]{NIST:DLMF}.

\begin{corollary}\label{cor:twopoint}
	Let $0<a<b$. Then
	\begin{equation}\label{eq:twopoint}
		\sqrt{\frac{a^2+\nu^2-1}{b^2+\nu^2-1}}<\frac{I_\nu(b)K_\nu(b)}{I_\nu(a)K_\nu(a)}<\sqrt{\frac{a^2+\nu^2+7/20}{b^2+\nu^2+7/20}},
	\end{equation}
	where the upper bound holds for $\nu>-1$ and the lower bound for $\nu\ge1$, and
	\begin{equation}\label{eq:rho}
		\exp\mleft(2\Delta_{-1/4}(a,b)\mright)<\frac{I_\nu(b)K_\nu(a)}{I_\nu(a)K_\nu(b)}<\exp\mleft(2\Delta_{1/5}(a,b)\mright),
	\end{equation}
	where the upper bound holds for $\nu>-1$ and the lower bound for $\nu\ge1/2$.
\end{corollary}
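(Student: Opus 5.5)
The plan is to integrate the pointwise bounds already established over $[a,b]$. The inequalities \eqref{eq:twopoint} come from the monotonicity statements of Corollary~\ref{cor:mono}, and the inequalities \eqref{eq:rho} come from Theorem~\ref{thm:product} via the Wronskian relation \eqref{eq:diff}.

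For \eqref{eq:twopoint}, take $\nu>-1$. By Corollary~\ref{cor:mono}(i) the function $\sqrt{x^2+\nu^2+7/20}\,P_\nu(x)$ is strictly decreasing, so its value at $b$ is smaller than its value at $a$. Dividing by $\sqrt{b^2+\nu^2+7/20}\,P_\nu(a)>0$ gives the upper bound; this uses that $P_\nu>0$ by \ref{item:M1} and that $x^2+\nu^2+7/20>0$. For $\nu\ge1$, Corollary~\ref{cor:mono}(ii) says that $\sqrt{x^2+\nu^2-1}\,P_\nu(x)$ is strictly increasing. Moreover $x^2+\nu^2-1\ge x^2>0$ on $[a,b]$. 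The same division then yields the lower bound.

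For \eqref{eq:rho}, I will use \eqref{eq:diff} and \eqref{eq:defh}, noting that $1/P_\nu=2\sqrt{q}$:
\[
\log\frac{I_\nu(b)K_\nu(a)}{I_\nu(a)K_\nu(b)}=\int_a^b\bigl(\ell_I(x)-\ell_K(x)\bigr)\,dx=\int_a^b\frac{dx}{x\,P_\nu(x)}=\int_a^b\frac{2\sqrt{q(x)}}{x}\,dx .
\]
Theorem~\ref{thm:product} gives $\sqrt{q(x)}<\sqrt{x^2+\nu^2+1/5}$ for $\nu>-1$. For $\nu\ge1/2$ it gives $\sqrt{q(x)}>\sqrt{x^2+\nu^2-1/4}$, and the radicand here is positive because $\nu^2-1/4\ge0$. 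Both inequalities are strict and the functions involved are continuous, so integrating strictly preserves them.

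The one computation needed is the antiderivative. With $s=\sqrt{x^2+m^2}$, we have
\[
\frac{d}{dx}\,m\log\frac{m+s}{x}=m\Bigl(\frac{x}{s(m+s)}-\frac1x\Bigr)=-\frac{m^2}{xs}.
\]
Hence $\Phi_\kappa'(x)=x/s+m^2/(xs)=s/x=\sqrt{x^2+\nu^2+\kappa}/x$. So the integral of $2\sqrt{x^2+\nu^2+\kappa}/x$ over $[a,b]$ equals $2\Delta_\kappa(a,b)$ for $\kappa=1/5$ and $\kappa=-1/4$, and exponentiating gives \eqref{eq:rho}.

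The only point needing care is the borderline case $\nu=1/2$, $\kappa=-1/4$. There $m=0$ and $\Phi_{-1/4}(x)=x$, which is still a valid antiderivative, so the argument goes through. Apart from this check, there is no real obstacle: the work was done in Theorems~\ref{thm:product} and~\ref{thm:sum}.
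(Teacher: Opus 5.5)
Your proposal is correct and follows the paper's proof. The paper also reads \eqref{eq:twopoint} off from the monotonicity in Corollary~\ref{cor:mono}. It obtains \eqref{eq:rho} by rewriting Theorem~\ref{thm:product}, via the Wronskian relation, as two-sided bounds on $\frac{d}{dx}\log(I_\nu/K_\nu)$ and integrating over $[a,b]$. Your explicit verification of $\Phi_\kappa'$ and of the degenerate case $m=0$ at $\nu=1/2$ fills in details the paper leaves implicit.
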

\begin{proof}[Proof of Corollary \ref{cor:twopoint}]
	\eqref{eq:twopoint} follows from Corollary \ref{cor:mono}. For \eqref{eq:rho}, we recall
	\begin{equation*}
		\frac{d}{dx}\log\frac{I_\nu(x)}{K_\nu(x)}=\ell_I-\ell_K=1/(xI_\nu(x) K_\nu(x)) ,
	\end{equation*}
	so that Theorem \ref{thm:product} can be written as  
	\begin{equation*}
		\frac{2}{x} \sqrt{x^2+\nu^2-1/4} <\frac{d}{dx}\log\frac{I_\nu(x)}{K_\nu(x)}< \frac{2}{x} \sqrt{x^2+\nu^2+1/5} ,
	\end{equation*}
	where the upper and lower bounds hold for $\nu > -1$ and $\nu \geq 1/2$, respectively.
	Integrating this inequality between $a$ and $b$, we get \eqref{eq:rho}.
\end{proof}

The inequality \eqref{eq:twopoint} improves the bounds $a/b<P_\nu(b)/P_\nu(a)<1$ which follow from the monotonicity of $xP_\nu(x)$ and $P_\nu(x)$. The bounds of Corollary \ref{cor:twopoint} tend to $1$ as $b\to a$, and are therefore accurate when $a$ and $b$ are close. For \eqref{eq:twopoint} this is a property that cannot be obtained from bounds for the function $I_\nu(x)K_\nu(x)$: for instance, Theorem \ref{thm:product} only gives
\[
\frac{I_\nu(b)K_\nu(b)}{I_\nu(a)K_\nu(a)}<\sqrt{\frac{a^2+\nu^2+1/5}{b^2+\nu^2-1/4}},
\]
which is larger than $1$ when $b$ is close to $a$, and the same happens with any bounds for the product obtained from bounds for the ratios $I_{\nu-1}(x)/I_\nu(x)$ and $K_{\nu+1}(x)/K_\nu(x)$; bounds for the logarithmic derivative, such as those of Theorem \ref{thm:sum}, are needed. In addition, \eqref{eq:twopoint} is very accurate for large $\nu$: since the relative error of the bounds of Theorem \ref{thm:sum} is $\mathcal O(\nu^{-2})$ uniformly in $x$ and $P_\nu'/P_\nu$ is always negative, the logarithms of the bounds in \eqref{eq:twopoint} approximate $\log\mleft(P_\nu(b)/P_\nu(a)\mright)$ with relative error $\mathcal O(\nu^{-2})$, uniformly in $0<a<b$ (for instance, for $\nu=10$ it is smaller than $1.4\times10^{-2}$).

\subsection{Improved bounds for the ratios}\label{sec:ratios}
Using the relations \cite[10.29.2]{NIST:DLMF}
\begin{align*}
	xI_\nu'(x)&=xI_{\nu-1}(x)-\nu I_\nu(x) , \\
	xK_\nu'(x)&=\nu K_\nu(x)-xK_{\nu+1}(x) ,
\end{align*}
we can rewrite Theorem \ref{thm:sum}  as
\begin{equation}\label{eq:diffratios}
	\frac{x^2}{x^2+\nu^2+7/20}<x\frac{K_{\nu+1}(x)}{K_\nu(x)}-x\frac{I_{\nu-1}(x)}{I_\nu(x)}<\frac{x^2}{x^2+\nu^2-1},
\end{equation}
where the lower bound holds for $\nu>-1$ and the upper bound for $\nu\ge1$. This can be used to improve known bounds for the ratios $I_{\nu-1}(x)/I_\nu(x)$ and $K_{\nu+1}(x)/K_\nu(x)$. In \cite[Theorem 5]{Segura:2023} the following bounds were obtained for $\nu\ge1/2$ and $x>0$:
\begin{alignat}{3}
	\nu+\sqrt{\nu^2+x(x-1)}<{}& x\frac{I_{\nu-1}(x)}{I_\nu(x)}&{}\le{}& \nu+\sqrt{\nu^2+x(x+1)},\label{eq:S23I}\\
	\nu+\sqrt{\nu^2+x(x-1)}<{}& x\frac{K_{\nu+1}(x)}{K_\nu(x)} &{}\le{}& \nu+\sqrt{\nu^2+x(x+1)}.\label{eq:S23K}
\end{alignat}
With the notation of \cite{Segura:2023} (see also \cite{SeguraSuzuki:2026}), the lower bound in \eqref{eq:S23I} and the upper bound in \eqref{eq:S23K} have accuracy $(1,3)$, while the other two bounds have accuracy $(1,1)$, that is, they only reproduce the leading term of the expansion as $x\to+\infty$. Combining the $(1,3)$ bounds with the lower bound in \eqref{eq:diffratios} we obtain improved versions of the $(1,1)$ bounds.

\begin{corollary}\label{cor:improved}
	For $\nu\ge1/2$ and $x>0$,
	\begin{align*}
		x\frac{I_{\nu-1}(x)}{I_\nu(x)}&<\nu+\sqrt{\nu^2+x(x+1)}-\frac{x^2}{x^2+\nu^2+7/20},\\
		x\frac{K_{\nu+1}(x)}{K_\nu(x)}&>\nu+\sqrt{\nu^2+x(x-1)}+\frac{x^2}{x^2+\nu^2+7/20}.
	\end{align*}
	These bounds are sharper than the upper bound in \eqref{eq:S23I} and the lower bound in \eqref{eq:S23K}, and they have accuracy $(1,3)$.
\end{corollary}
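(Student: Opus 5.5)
The two inequalities follow by combining the lower bound in \eqref{eq:diffratios}, valid for $\nu>-1$ and hence for $\nu\ge1/2$, with the $(1,3)$ bounds of \cite[Theorem 5]{Segura:2023}. Write $\phi_I=xI_{\nu-1}/I_\nu$ and $\phi_K=xK_{\nu+1}/K_\nu$; then \eqref{eq:diffratios} reads $\phi_K-\phi_I>x^2/(x^2+\nu^2+7/20)$. For the first inequality, take the $(1,3)$ upper bound for $\phi_K$ from \eqref{eq:S23K}, which gives $\phi_I<\phi_K-x^2/(x^2+\nu^2+7/20)$. For the second, take the $(1,3)$ lower bound for $\phi_I$ from \eqref{eq:S23I}, which gives $\phi_K>\phi_I+x^2/(x^2+\nu^2+7/20)$. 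Since the strict inequality in \eqref{eq:diffratios} survives the non-strict $\le$ in \eqref{eq:S23K}, both results are strict.

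The sharpness claims relative to the old bounds are immediate. The new upper bound for $\phi_I$ is the $(1,1)$ bound $\nu+\sqrt{\nu^2+x(x+1)}$ minus a strictly positive quantity. Likewise, the new lower bound for $\phi_K$ is the old one plus a positive quantity.

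It remains to check the accuracy $(1,3)$. Following the convention of \cite{Segura:2023}, I understand this to mean that the bound reproduces the first terms of the expansion as $x\to+\infty$ up to the relevant order, and is exact to leading order at $x\to0^+$. I will verify this by expansions at both ends.

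As $x\to+\infty$, the $(1,3)$ bounds reproduce the large-$x$ expansion of $\phi_K$ (respectively $\phi_I$) to the required number of terms. Meanwhile $x^2/(x^2+\nu^2+7/20)=1-(\nu^2+7/20)x^{-2}+\mathcal O(x^{-4})$, and the true difference $\phi_K-\phi_I=-x(\ell_I+\ell_K)$ has expansion $1+\mathcal O(x^{-2})$. This follows from $H\to-1/4$ in Lemma \ref{lem:limits}, since $-x(\ell_I+\ell_K)=x^2/Q=x^2/(x^2+\nu^2+H)$. The combined bound therefore differs from the true ratio by $\mathcal O(x^{-1})$ or better, with the constant and $x$ terms matching. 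I expect to match the expansion of $\sqrt{\nu^2+x(x\pm1)}=x\pm\tfrac12+(\nu^2-\tfrac14)/(2x)+\dots$ against $\phi_I\sim x+\tfrac12+(\nu^2-1/4)/(2x)\dots$ and $\phi_K\sim x+\nu+\tfrac12+\dots$.

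As $x\to0^+$, the added term is $\mathcal O(x^2)$, so the leading behaviour $2\nu$ at the origin is unchanged. The main obstacle is pinning down exactly which orders the $(1,3)$ notation counts. My plan is to compute the first three large-$x$ coefficients of each new bound explicitly and compare them with the known asymptotic expansions \cite[10.40]{NIST:DLMF} of the ratios.
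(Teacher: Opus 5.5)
Your proposal is correct and follows essentially the paper's proof. You use the same decomposition, combining the lower bound in \eqref{eq:diffratios} with the upper bound in \eqref{eq:S23K} and the lower bound in \eqref{eq:S23I}. Your strictness and ``sharper than'' remarks are also right.

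The accuracy paragraph needs two repairs. First, your own ingredients give more than ``$\mathcal O(x^{-1})$ or better''. The $(1,3)$ input bounds err by $\mathcal O(x^{-2})$, and so does $x^2/(x^2+\nu^2+H)-x^2/(x^2+\nu^2+7/20)$. Hence the new bounds err by $\mathcal O(x^{-2})$ and match the three terms $x$, $x^0$, $x^{-1}$, which is exactly the $(1,3)$ claim. Second, the expansion you wrote for $\phi_I$ is wrong. The correct ones are $\phi_I\sim x+\nu-\tfrac12+\tfrac{4\nu^2-1}{8x}$ and $\phi_K\sim x+\nu+\tfrac12+\tfrac{4\nu^2-1}{8x}$.
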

\begin{proof}[Proof of Corollary \ref{cor:improved}]
	The first inequality follows from 
	\begin{equation*}
		x \frac{I_{\nu-1}}{I_\nu} = x \frac{K_{\nu+1}}{K_\nu} - x \mleft(\frac{K_{\nu+1}}{K_\nu} - \frac{I_{\nu-1}}{I_\nu} \mright) ,
	\end{equation*}
	the upper bound in \eqref{eq:S23K} and the lower bound in \eqref{eq:diffratios}. The second one follows similarly from 
	\begin{equation*}
		x \frac{K_{\nu+1}}{K_\nu} = x \frac{I_{\nu-1}}{I_\nu} + x \mleft(\frac{K_{\nu+1}}{K_\nu} - \frac{I_{\nu-1}}{I_\nu} \mright) ,
	\end{equation*}
	and the lower bound in \eqref{eq:S23I} and the lower bound in \eqref{eq:diffratios}. Because 
	\begin{equation}
		\frac{x^2}{x^2+\nu^2+7/20}= 1- (\nu^2+ 7/20) x^{-2} + \mathcal O(x^{-4})
	\end{equation}
	as $x\to+\infty$, the new bounds reproduce the first three terms of the expansions of the ratios as $x\to+\infty$, as the $(1,3)$ bounds used in their derivation.
\end{proof}

The improvement is also significant for large $\nu$ with $x/\nu$ fixed. For instance, for $x=\nu=10$, the relative error of the lower bound for $xK_{\nu+1}(x)/K_\nu(x)$ in \eqref{eq:S23K} is $2.5\times10^{-2}$, while that of Corollary \ref{cor:improved} is $4.7\times 10^{-3}$; for $x=\nu=100$ the relative errors are $2.5\times10^{-3}$ and $4.3\times 10^{-4}$, respectively.

\section{Applications}\label{sec:applications}

We briefly mention two applications in which the bounds of Theorems \ref{thm:product} and \ref{thm:sum} give elementary and uniformly accurate estimates.

\subsection{Casimir energy of a dielectric cylinder}

The function
\[
\mu_n(x)=x\,\frac{d}{dx}\mleft(I_n(x)K_n(x)\mright)=x\,I_n(x)K_n(x)\,\frac{d}{dx}\log(I_n(x)K_n(x))
\]
appears in the computation of the Casimir energy of a dielectric cylinder of radius $a$ when the velocity of light is the same inside and outside the cylinder: the mode sum giving the energy involves the terms $\log\bigl[1-\xi^2\mu_n(ay)^2\bigr]$, $n\in\mathbb Z$, where $\xi$ depends on the permittivities of both media \cite{Lambiase:1999,Klich:2000}. Since $\mu_{-n}=\mu_n$, it is enough to consider $n\ge0$, and Theorems \ref{thm:product} and \ref{thm:sum} give
\[
\frac{x^2}{2\mleft(x^2+n^2+7/20\mright)\sqrt{x^2+n^2+1/5}}<-\mu_n(x)<\frac{x^2}{2\mleft(x^2+n^2-1\mright)\sqrt{x^2+n^2-1/4}},
\]
where the lower bound holds for $n\ge0$ and the upper bound for $n\ge1$. Both bounds have relative error $\mathcal O(n^{-2})$ as $n\to\infty$, uniformly in $x>0$, and they reproduce, with rigorous two-sided control, the leading term $\mu_n(nz)^2\sim p^6z^4/(4n^2)$, $p=(1+z^2)^{-1/2}$, of the uniform asymptotic expansion used for large $n$ in \cite[Eq.~(3.13)]{Lambiase:1999}.

\subsection{Schwarz methods for problems with cylindrical interfaces}

In the analysis of Schwarz domain decomposition methods for the problem $(\eta-\Delta)u=f$ with cylindrical interfaces $r=a$ and $r=b$, $a<b$ \cite{Gigante:2013}, the Fourier modes of the solutions are multiples of $I_m(\alpha r)$ and $K_m(\alpha r)$, with $\alpha=\sqrt{k^2+\eta}$, $m\in\mathbb Z$, $k\in\mathbb R$. The reduction factor of the classical Schwarz method is the inverse of the quantity bounded in \eqref{eq:rho} \cite[Proposition 3.1]{Gigante:2013}, and therefore
\[
\exp\mleft(-2\Delta_{1/5}(\alpha a,\alpha b)\mright)<\rho_{\rm cla}(m,k)=\frac{I_m(\alpha a)\,K_m(\alpha b)}{I_m(\alpha b)\,K_m(\alpha a)}<\exp\mleft(-2\Delta_{-1/4}(\alpha a,\alpha b)\mright),
\]
the upper bound for $m\ge1$ and the lower bound for $m\ge0$ (with $\nu=m$ in $\Delta_\kappa$). The analysis of the optimized method in \cite[Section 4.4]{Gigante:2013} relies on the quantities $\mathcal A=\alpha I_m'(\alpha a)/I_m(\alpha a)$ and $\mathcal B=-\alpha K_m'(\alpha a)/K_m(\alpha a)$, their monotonicity with respect to the frequency $k$, and their values at the extreme frequencies \cite[Lemma 4.4 and Theorem 4.5]{Gigante:2013}. By the Wronskian relation,
\[
\mathcal A+\mathcal B=\frac{1}{a\,I_m(\alpha a)K_m(\alpha a)},
\]
and Corollary \ref{cor:twopoint} gives, for $|k_1|<|k_2|$ and $x_j=a\sqrt{k_j^2+\eta}$,
\[
\sqrt{\frac{x_2^2+m^2+7/20}{x_1^2+m^2+7/20}}<\frac{(\mathcal A+\mathcal B)(m,k_2)}{(\mathcal A+\mathcal B)(m,k_1)}<\sqrt{\frac{x_2^2+m^2-1}{x_1^2+m^2-1}},
\]
the lower bound for $m\ge0$ and the upper bound for $m\ge1$; as discussed after Corollary \ref{cor:twopoint}, estimates of this type, which remain accurate when $k_1$ and $k_2$ are close, cannot be obtained from bounds for the product $I_m(x)K_m(x)$ or for the ratios of modified Bessel functions.

\section*{Use of Artificial Intelligence}
During the preparation of this work, the authors used Claude Sonnet 5.5 (Anthropic) in order to assist with drafting the manuscript and performing numerical checks. All theorem statements and proofs were developed entirely by the authors. The authors reviewed and edited the AI-assisted content as needed and take full responsibility for the content of the manuscript.

\bibliographystyle{amsplain}
\bibliography{bessel_product_sum.bib}

\end{document}